\newtheorem{theorem}{Theorem}[subsection]
\newtheorem{lemma}[theorem]{Lemma}
\newtheorem{proposition}[theorem]{Proposition}
\theoremstyle{definition}
\theoremstyle{definition}
\theoremstyle{remark}
\newcommand\C{{\bf C}}
\newcommand\M{{\mathscr M}}
\newcommand\B{{\mathscr B}}
\def\n{\ensuremath {\mathfrak n}}
\def\m{\ensuremath {\mathfrak m}}
\def\H{\ensuremath {{\bf H}}}
\def\P{\ensuremath {{\bf P}}}
\def\Q{\ensuremath {{\bf Q}}}
\def\T{\ensuremath {{\bf T}}}
\def\O{\ensuremath {{\mathscr O}}}
\def\D{\ensuremath {{\mathscr D}}}
\def\R{\ensuremath {{\mathscr R}}}
\def\V{\ensuremath {{\mathscr V}}}
\def\CC{\ensuremath {{\mathscr C}}}
\def\SS{\ensuremath {{\mathscr S}}}
\def\X{\ensuremath {{\mathcal X}}}
\newcommand\be{\begin{equation}}
\newcommand\ee{\end{equation}}
\keywords{Bianchi modular form; modular symbol; Manin symbol; Hecke operator; Fourier expansion.}
\subjclass[2010]{11F30, 11F67}
\begin{document}

%

%

\title{Universal Fourier expansions of Bianchi modular forms
}

\author{Tian An Wong}

\address{Department of Mathematics,\\
The University of British Columbia\\
Room 121, 1984 Mathematics Road\\
Vancouver, BC\\
Canada V6T 1Z2}
\email{wongtianan@math.ubc.ca}

\maketitle

\begin{abstract}
We generalize Merel's work on universal Fourier expansions to Bianchi modular forms over Euclidean imaginary quadratic fields, under the assumption of the nondegeneracy of a pairing between Bianchi modular forms and Bianchi modular symbols. Among the key inputs is a computation of the action of Hecke operators on Manin symbols, and building upon the Heilbronn-Merel matrices constructed by Mohamed. 
\end{abstract}

\section{Introduction}

Modular symbols provide a formalism enabling computations with modular forms. Merel's universal Fourier expansion \cite{merel} provided a way to more efficiently compute modular symbols using the basis of Manin symbols \cite{stein}. Before Merel, computing the Hecke action required converting Manin symbols to modular symbols, applying the Hecke action, and then converting back.

In this paper, we will be interested in Bianchi modular symbols, that is, modular symbols over imaginary quadratic fields. The results of Merel on Manin symbols have recently been partially generalized by Mohamed \cite{moh} to imaginary quadratic fields $K$, most explicitly in the Euclidean cases, but fell short of obtaining the Fourier expansion in terms of Bianchi modular symbols. We also note that related results were obtained in the thesis of Powell in the context of the Eisenstein ideal and conjectures of Sharifi \cite{powell}. 

We extend Merel's work to Bianchi modular forms over Euclidean imaginary quadratic fields $K$, namely $K = \Q(\sqrt{-d})$ with $d = 1,2,3,7,11$. The Manin symbols in this case were first studied in the early work of Cremona \cite{C}. In principle our work can be extended to non-Euclidean $K$ with class number one, in which case one requires the more complicated pseudo-Euclidean algorithms used in \cite{whit,moh}, but new ideas will be certainly needed for the general case. Our work largely follows the method of Merel, studying the Hecke action on Bianchi modular symbols. As current developments on Bianchi modular forms explore computational approaches such as in  \cite{CA,berger,rahm,rahm2}, we hope that the present study will be a modest contribution towards this body of work. 

To state the main result, we first introduce some notation. Let $G=SL(2)$, and let $\n$ be an ideal of $\O$, where $\O$ is the ring of integers of $K$, a Euclidean imaginary quadratic field. Also let $d_k$ be the discriminant of $K$. Define the congruence subgroups of level $\n$ to be 
\[
\Gamma_1(\n) = \left\{ \begin{pmatrix}a&b\\c&d\end{pmatrix} \in G(\O) : c \equiv d-1 \equiv 0 \pmod \n\right\}
\]
and
\[
\Gamma_0(\n) = \left\{ \begin{pmatrix}a&b\\c&d\end{pmatrix} \in G(\O) : c \equiv  0 \pmod \n\right\}.
\]
Also define
\[
E_\n = \{(u,v)\in(\O/\n)^2 : \langle u , v \rangle = \O/\n\},
\]
which is in bijection with $\Gamma_1(\n)\backslash G(\O)$. 

For any integer $k\ge2$ and $\O$-algebra $R$,  let $R_{k-2}[X,Y]$ be the vector space of homogeneous polynomials of degree $k-2$ in the variables $X,Y$. Also, let $R_{k-2}[X,Y][(\O/\n\O)^2]$ be the space of linear combinations of elements in $(\O/\n\O)^2$ with coefficients in $R_{k-2}[X,Y]$. Given a linear map $\phi$ on this space, we define a linear map given by the formula 
\[
\phi|_g(P(X,Y)[u,v]) = \phi(P(aX+bY,cX+dY)[au+cv,bu+dv])
\]
for $P\in R_{k-2}[X,Y], (u,v) \in (\O/\n\O)^2,$ and $g = (\begin{smallmatrix}a&b\\c&d\end{smallmatrix})$. Define
\[
P_\n = \bigcup_{\m |\n} (\O/\m \O)^\times 
\]
with the convention that for $\m = 1$, $(\O/\m \O)^\times $ is a singleton set. Let $R[P_\n]_k$ be the quotient of $R[P_\n]$ by elements of the form $[a] - (-1)^k[-a]=0,$ for $a\in (\O/\m\O)^\times, \m|\n$. If $a\in (\O/\n\O)^\times$ is invertible mod $\m$ for some $\m|\n$, we denote by $[a]_\m$ the image of $[a \mod \m]$ in $R[P_\n]_k$. Finally, define an $R$-bilinear map 
\begin{align*}
b:R_{k-2}[X,Y][E_\n]&\to R[P_\n]_k \\
P(X,Y)[u,v] &\mapsto P(1,0)[v^{-1}]_{(u,\n)} - P(0,1)[u^{-1}]_{(v,\n)}
\end{align*}
where $(u,\n)$ is the gcd of $u$ and $\n$, which is the same as the order of the subgroup of $\O/\n\O$ generated by $u$, and $v^{-1}$ is identified with the inverse of $v$ mod $(u,\n)$.

Cremona has explicitly identified the relations satisfied by Manin symbols in the form of a certain ideal $\R$ \cite{C}. For example, in the case of $d=1$ the ideal is described in \eqref{R1}. In this case, we shall say $\phi$ satisfies relations determined by $\R$ if 
\be
\label{relation}
\phi + \phi|_S = \phi + \phi|_{ST}+ \phi|_{(ST)^2} = \phi - \phi|_J =  \phi + \phi|_X+ \phi|_{X^2} = 0 ,
\ee
where the matrices $S,T,J,$ and $X$ are defined in \eqref{R0} and \eqref{R1}, and similarly for the other Euclidean fields. 

Our main result then is the following. 

\begin{theorem}
\label{main}
Let $\phi$ be a linear map on $R_{k-2}[X,Y][(\O/\n\O)^2]$ that satisfies relations defined by $\R$, and such that $\phi(P[u,v])=0$ if $[u,v]\not\in E_\n$. Assume moreover the nondeneracy of the pairing of Bianchi cusp forms and cuspidal modular symbols in \eqref{ass}. 

Then for any $x\in R_{k-2}[X,Y][E_\n]$ satisfying $b(x)=0$, the series 
\[
F(z,t) = \sum_{M\in \X}\phi|_M(x) t^2{\bf K}\left(\dfrac{4\pi|\det(M)|t }{\sqrt{d_K}}\right)\psi\left(\dfrac{\det(M) z}{\sqrt{d_K}}\right)
\]
is the Fourier expansion of a Bianchi cusp form of weight $2$ and level $\Gamma_1(\n)$, and $\X$ is the set of Heilbronn-Merel matrices defined in \eqref{HB}. Here  $\psi(z) = e^{2\pi(z+\bar z)}$, ${\bf K}(t) = (-\dfrac{i}{2}K_1(t),K_0(t),\dfrac{i}{2}K_{1}(t)))$ with $K_0,K_1$ the modified Bessel functions, and $(z,t)\in\H_3$. Moreover, all such Bianchi cusp forms can be expressed in this manner.
\end{theorem}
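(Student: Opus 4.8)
The plan is to follow Merel's strategy, transplanted to the Bianchi setting, using the pairing between Bianchi cusp forms and cuspidal modular symbols as the organizing tool. First I would set up the pairing $\langle\ ,\ \rangle$ between weight-$2$ level-$\Gamma_1(\n)$ Bianchi cusp forms and cuspidal Bianchi modular symbols, and recall that a Bianchi cusp form $f$ is determined by the values $\langle f, \{g(0),g(\infty)\}\rangle$ as $g$ ranges over coset representatives for $\Gamma_1(\n)\backslash G(\O)$; these coset representatives are indexed by $E_\n$. The function $F(z,t)$ in the statement is, by construction, the generating series whose Fourier--Bessel coefficients are obtained by pairing a fixed cusp form against $\phi|_M(x)$ summed over the Heilbronn--Merel set $\X$. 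So the first substantive step is to show that for each fixed Bianchi cusp form $f$ of the prescribed weight and level, the pairing $\langle f, F(z,t)\rangle$ — interpreted coefficientwise — reproduces $f$ itself, i.e. that the Heilbronn--Merel matrices $\X$ compute the $n$-th Hecke-type coefficient of $f$ from modular-symbol data. This is precisely where Mohamed's construction of the Heilbronn--Merel matrices over Euclidean imaginary quadratic fields enters, together with the computation of the Hecke action on Manin symbols established earlier in the paper: the sum over $\X$ realizes the action that sends the modular symbol $\{0,\infty\}$-component to the full collection of Hecke translates, and the Fourier expansion of a Bianchi cusp form is governed by these translates via the standard unfolding of the Bessel-function Fourier expansion on $\H_3$.

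Next I would verify that $F(z,t)$ genuinely lies in the space of cusp forms, not merely that it pairs correctly. For this, the hypotheses on $\phi$ do the work: the relations \eqref{relation} determined by $\R$ are exactly Cremona's Manin relations, so $\phi$ descends to a well-defined functional on the space of Bianchi modular symbols; the vanishing condition $\phi(P[u,v])=0$ for $[u,v]\notin E_\n$ ensures the symbol is supported on the correct coset space for level $\n$; and the condition $b(x)=0$ is the boundary condition forcing $x$ to define a \emph{cuspidal} symbol rather than a general one (the map $b$ records the image in the boundary/Eisenstein part). Granting the nondegeneracy assumption \eqref{ass}, the functional on cusp forms defined by $f\mapsto\langle f,F\rangle$-coefficients corresponds to a genuine element of the cusp form space, and by the first step that element \emph{is} the form whose expansion is $F(z,t)$; hence $F(z,t)$ is the Fourier expansion of a Bianchi cusp form of weight $2$ and level $\Gamma_1(\n)$.

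For the surjectivity clause — that every such cusp form arises this way — I would run the argument in reverse: given a Bianchi cusp form $g$, its associated cuspidal modular symbol yields, via Cremona's presentation by Manin symbols, a linear map $\phi$ satisfying \eqref{relation} and the support condition; choosing $x$ to be the Manin symbol representing $\{0,\infty\}$ (so that $b(x)=0$ automatically, or after a harmless normalization), the series $F(z,t)$ built from this $\phi$ has the same modular-symbol invariants as $g$, and nondegeneracy of the pairing forces $F(z,t)$ to be the Fourier expansion of $g$ itself. The combinatorial identity behind this reversal is again the Heilbronn--Merel matrix computation, now read as expressing the identity operator (or a fixed Hecke operator) in terms of the $\X$-sum.

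The main obstacle I anticipate is the first step: establishing that the sum over the Heilbronn--Merel set $\X$, pushed through $\phi$, correctly computes the Fourier--Bessel coefficients of a Bianchi cusp form. Over $\Q$ this is Merel's delicate matching of Heilbronn matrices with the $n$-th Hecke operator and the Fourier coefficients; over an imaginary quadratic field the Bessel-function shape of the Fourier expansion on $\H_3$, the appearance of $\sqrt{d_K}$ and the vector-valued ${\bf K}(t)$, and the need to track ideals rather than integers all complicate the bookkeeping. Mohamed's explicit Euclidean Heilbronn--Merel matrices and the Hecke-on-Manin-symbols computation from earlier in the paper should supply the needed input, but reconciling the normalizations — in particular matching $\det(M)$ as it runs over $\X$ with the ideals indexing Fourier coefficients, and confirming that the relations \eqref{relation} are compatible with the full $\X$-action and not merely with a single Hecke operator — will be the technical heart of the proof. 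The nondegeneracy assumption \eqref{ass} is what lets us avoid proving an exact isomorphism of Hecke modules and instead argue at the level of pairings.
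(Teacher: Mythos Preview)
Your overall framing is right—relations $\R$ make $\phi$ descend to $\M_k(\n)$, the condition $b(x)=0$ forces $m(x)\in\SS_k(\n)$, and the Heilbronn--Merel sum computes $T_\alpha$ on Manin symbols—but the logical core of the forward direction is confused. You propose to ``show that for each fixed Bianchi cusp form $f$, the pairing $\langle f, F(z,t)\rangle$ reproduces $f$ itself.'' This cannot be right: $F$ is one specific series determined by the fixed data $(\phi,x)$, so it cannot reproduce every $f$. And trying to define a functional $f\mapsto\langle f,F\rangle$ via the modular-symbol pairing \eqref{ass} is circular, since $F$ is not yet known to be a cusp form (nor a modular symbol).

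What is missing is a second, more elementary duality: the pairing $S_2(\n)\times\T\to\C$ given by $(f,T)\mapsto a_1(Tf)$ is nondegenerate (this uses only $a_\alpha(f)=a_1(T_\alpha f)$). Consequently, \emph{any} linear map $\Lambda:\T\to\C$ arises as $\Lambda(T)=a_1(Tf)$ for a unique $f\in S_2(\n)$, so the series with $\alpha$-th coefficient $\Lambda(T_\alpha)$ is automatically the Fourier expansion of a cusp form. The paper applies this with $\Lambda(T)=\phi_m(T\,m(x))$, where $\phi_m$ is the functional on $\SS_k(\n)$ induced by $\phi$; the assumed nondegeneracy \eqref{ass} is used only to identify the Hecke algebra acting on $\SS_k(\n)$ with $\T$ acting on $S_2(\n)$, so that $\Lambda$ really is a map on $\T$. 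Then Theorem~\ref{heckethm} gives $\Lambda(T_\alpha)=\sum_{M\in\X_\alpha}\phi|_M(x)$, and summing over $\alpha$ yields $F(z,t)$. Your converse is essentially correct: take $\phi=\langle f,\cdot\rangle$ and choose $x$ so that $\langle f,m(x)\rangle=a_1(f)$; but note that ``$x=\{0,\infty\}$'' is not quite the right specification—$x$ must be chosen relative to $f$ so that pairing with $m(x)$ extracts the first Fourier coefficient.
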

If $b(x)$ is nonzero, then the series obtained should be, up to the constant term, an element of $\SS_k(\n)$. While the theorem is proved for weight 2 forms, we have developed the theory for general weight $k$ where possible. Regarding the assumption of nondegeneracy, we note that this is proved in the classical case by Shokurov \cite{shok} using the cohomology of modular Kuga varieties. In our case, it seems reasonable that the assumption can be removed at least in the case of weight $k=2$.

This paper is organized as follows. In Section \ref{B}, we recall the Bianchi modular forms and Bianchi modular symbols, and prove the properties regarding Manin symbols which we shall require. In Section \ref{C}, we develop the Hecke action on Manin symbols, using the formalism of $C_\Delta$-families and the Heilbronn-Merel matrices. Finally, in Section \ref{U} we prove the main theorem.

\section{Bianchi modular forms and modular symbols}
\label{B}

\subsection{Bianchi modular forms}
For any $\gamma \in G(K)$ and $w=(z,t) \in \H_3$, the upper-half space, define
\[
j(\gamma,w) = \begin{pmatrix}cz + d& - ct \\ \bar{c}t& \overline{cz+d}\end{pmatrix}, \qquad \gamma = \begin{pmatrix}a&b\\ c&d\end{pmatrix}.
\]
Then given a function $F$ on $\H_3$ with values in $\C^3$, define the slash operator
\[
(F|_ k \gamma)(w) = \sigma^k(j(\gamma,w)^{-1})F(\gamma w),
\]
where $\sigma^k$ is the symmetric $k$-th power representation of $G$ on $\C^2$. We define an automorphic form, or Bianchi modular form of weight $k$ on $\Gamma$ to be a harmonic function $F$ such that 
\[
(F|_k \gamma)(w) = F(w)
\]
for all $\gamma\in \Gamma$.  Moreover, we call $F$ a Bianchi cusp form if the integral 
\[
\int_{\C/\O}F|_\gamma(x,y) dx 
\]
vanishes for every $\gamma\in G(\O)$. Denote by $S_k(\Gamma)$ the space of weight $k$ Bianchi cusp forms on $\Gamma$. 

When $k=2$, the $\Gamma$-invariance implies that $F$ has a Fourier-Bessel expansion of the form
 \[
 F(x,y) = \sum_{\alpha\in\O,\alpha\neq 0}c(\alpha) y^2{\bf K}\left(\dfrac{4\pi|\alpha|y}{\sqrt{d_K}}\right)\psi\left(\dfrac{\alpha x}{\sqrt{d_K}}\right)
 \]
where $\psi(x) = e^{2\pi(x+\bar x)}$ and ${\bf K}(t) = (-\dfrac{i}{2}K_1(t),K_0(t),\dfrac{i}{2}K_{1}(t)))$ with $K_0,K_1$ being the modified Bessel functions satisfying the differential equation
\[
\dfrac{dK_j}{dt^2}+\dfrac{1}{t}\dfrac{dK_j}{dy}-\left(1+\dfrac{1}{t^{2j}}\right(K_j = 0, \qquad j=0,1
\]
decreasing rapidly at infinity \cite{sengun}. 

\subsection{Cohomology}
Let $\Gamma$ be a finite-index subgroup of $G(\O)$, and $V$ a representation of $\Gamma$. Then $V$ gives rise to a locally-constant sheaf on the quotient $Y_\Gamma = \Gamma\backslash \H_3$, and it follows from the contractibility of $\H_3$ that 
\[
H^*(\Gamma,V) \simeq H^*(Y_\Gamma,\V), \quad H_*(\Gamma,V) \simeq H_*(Y_\Gamma,\V).
\]
Let $X_\Gamma$ be the Borel-Serre compactification of $Y_\Gamma$, and let $\bar\V$ be a suitable extension of $\V$ to $X_\Gamma$. Define the cuspidal cohomology $H^*_\text{cusp}(Y_\Gamma, \V)$, and thus also $H^*_\text{cusp}(\Gamma, V)$, to be the kernel of the restriction homomorphism
\[
H^*(X_\Gamma,\bar\V) \to H^*(\partial X_\Gamma , \bar\V).
\]

For positive integers $k$ and $l$, consider the representation
\[
V_{k,l}(R) = R_k[X,Y]\otimes R_l[X,Y]^\tau
\]
where $\tau$ is the nontrivial automorphism of $K$, and $R_k[X,Y]$ is the space of homogenous polyonomials of degree $k$ in the variables $X,Y$ with coefficients in $R$, and the action of $G(K)$ is given by the rule 
\begin{align*}
&(P(X,Y)\otimes P'(X,Y))|_g \\
&=  P(dX-bY, -cX+aY)\otimes P(d^\tau X+(-b)^\tau Y, (-c)^\tau X+a^\tau Y), \quad g=\begin{pmatrix}a & b\\ c&d\end{pmatrix}.
\end{align*}
Then a well-known result of Harder \cite{harder} gives
\be
\label{harder}
H^i_\text{cusp}(\Gamma,V_{k,k}(\C))\simeq S_k(\Gamma)
\ee
for $i=1,2$. 

\subsection{Bianchi modular symbols}

Let $\Gamma$ be a finite index subgroup of $G(\O)$. Let $R$ be an $\O$-module, and $k\ge2$ an integer; if $k$ is odd, further suppose that $-\text{Id}\not\in\Gamma$. Let $\M$ be the torsion free abelian group generated by pairs $\{\alpha,\beta\}$, where $\alpha,\beta\in\P^1(K)$, modulo the relations
\[
\{\alpha,\alpha\} = \{\alpha,\beta\} + \{\beta,\gamma\}+\{\gamma,\alpha\} =0
\]
for any $\alpha,\beta,\gamma\in\P^1(K)$. Then define
\[
\M_k =R_{k-2}[X,Y]\otimes \M,
\]
where $R_{k-2}[X,Y]$ is the space of complex homogeneous polynomials in the variables $X$ and $Y$ of degree $k-2$ with coefficients in $R$. Clearly $\M_2 = \M$. Define a linear action of $G(K)$ on $P\in R_{k-2}[X,Y]$ and $P\otimes\{\alpha,\beta\}\in\M_k$ by
\[
P|_g(X,Y) = P(dX-bY, -cX+aY),\qquad g=\begin{pmatrix}a & b\\ c&d\end{pmatrix}
\]
and 
\[
P\otimes\{\alpha,\beta\}|_g = P|_g\otimes \{g\alpha,g\beta\}.
\]
Let $\M_k(\Gamma)$ be the quotient of $\M_k$ by the relation $x|_\gamma = x$ for all $x\in\M_k, \gamma\in\Gamma.$ We denote by $P\{\alpha,\beta\}$ the image of $P\otimes \{\alpha,\beta\}\in\M_k$ in $\M_k(\Gamma)$. The elements of $\M_k(\Gamma)$ will be called modular symbols of weight $k$ for $\Gamma$. 

It follows from \cite[Theorem 4.1]{moh} and Poincar\'e duality, that if $\Gamma$ is a congruence subgroup and its torsion elements have orders invertible in $R$, one has
\be
\label{moh}
H_1 (\Gamma, V_{k,k}(\C) ) \simeq \M_{k}(\Gamma)
\ee
for $k\ge2$.

\subsection{Boundary modular symbols}

Let $\B$ be the abelian group generated by the elements $\{\alpha\}, \alpha\in \P^1(K)$, and let $\B_k = R_{k-2}[X,Y]\otimes \B$. Define a linear action of $g\in G$ on $P\otimes\{\alpha\}\in \B_k$ by
\[
P\otimes\{\alpha\}|_g = P|_g\otimes \{g\alpha\}.
\]
We then define $\B_k(\Gamma)$ to be the quotient of $\B_k$ by the relation $x|_\gamma = x$ for all $x\in \B_k,\gamma\in \Gamma$, and we call the elements of $\B_k(\Gamma)$ the boundary modular symbols of weight $k$ of level $\Gamma$.

Let $R[\Gamma\backslash K^2]_k$ be the space $R[\Gamma\backslash K^2]$ modulo the relation 
\[
[\Gamma(\lambda u, \lambda v)]\sim \left(\dfrac{\lambda}{|\lambda|}\right)^k [\Gamma(u, v)].
\]
If $k$ is even, this vector space is canonically isomorphic to $R[\Gamma\backslash \P^1(K)]$, while for any $k\ge2$, its dimension is equal to $|\Gamma\backslash\P^1(K)|$. Also define the linear map
\[
\mu: \B_k(\Gamma) \to R[\Gamma\backslash K^2]_k
\]
sending $P\otimes \{\dfrac{u}{v}\}$ to the element $P(u,v)[\Gamma(u,v)]$. 

\begin{lemma}
The map $\mu$ is well-defined, and is an isomorphism.
\end{lemma}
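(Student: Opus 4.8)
The plan is to exhibit an explicit inverse to $\mu$, and check compatibility with all the relations imposed in passing to $\Gamma$-coinvariants. First I would define, on the level of generators, a map $\nu : R_{k-2}[X,Y]\otimes R[\Gamma\backslash K^2]_k \to \B_k(\Gamma)$ (or rather directly from $R[\Gamma\backslash K^2]_k$, carrying the polynomial data) by sending $P(u,v)[\Gamma(u,v)]$ to the class of $P\otimes\{u/v\}$, where for a representative $(u,v)\in K^2$ one takes the point $u/v\in\P^1(K)$ and the polynomial $P$ recovered from the homogeneous datum. The first task is to check this is well-defined: the scaling relation $[\Gamma(\lambda u,\lambda v)]\sim(\lambda/|\lambda|)^k[\Gamma(u,v)]$ must be matched on the $\B_k(\Gamma)$ side by the effect of the scalar matrix $\mathrm{diag}(\lambda,\lambda)\in G(K)$ acting on $P\otimes\{\lambda u/\lambda v\} = P\otimes\{u/v\}$, which rescales $P$ by $\lambda^{k-2}$ and — after accounting for the normalization built into $R_{k-2}[X,Y]\otimes\B$ versus the $(\lambda/|\lambda|)^k$ twist — produces exactly the claimed equivalence; this is the routine bookkeeping with homogeneity degrees that I would not grind through here.

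Next I would verify that $\mu$ is surjective: every generator $[\Gamma(u,v)]$ of $R[\Gamma\backslash K^2]_k$ is hit, since $(u,v)\neq(0,0)$ determines $u/v\in\P^1(K)$ and $\mu(P\otimes\{u/v\}) = P(u,v)[\Gamma(u,v)]$; letting $P$ range over a basis of $R_{k-2}[X,Y]$ and using that $R[\Gamma\backslash K^2]_k$ is spanned by such elements gives surjectivity. For injectivity — or equivalently that $\nu$ descends to $\B_k(\Gamma)$ and is a two-sided inverse — the key point is that the only relation defining $\B_k(\Gamma)$ from the free module $\B_k$ is $x|_\gamma = x$ for $\gamma\in\Gamma$, and $\Gamma$ acting on $\P^1(K)$ has orbits matching $\Gamma\backslash\P^1(K)$; combined with the stated fact that $R[\Gamma\backslash K^2]_k$ has dimension exactly $|\Gamma\backslash\P^1(K)|$ (whether or not $k$ is even), a dimension count then forces $\mu$ to be an isomorphism once it is shown to be a well-defined surjection and $\nu\circ\mu = \mathrm{id}$ on generators.

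The main obstacle I anticipate is the case of odd $k$, where $R[\Gamma\backslash K^2]_k$ is \emph{not} simply $R[\Gamma\backslash\P^1(K)]$: one must track the sign/phase factor $(\lambda/|\lambda|)^k$ carefully, and in particular confirm that the possible presence of $-\mathrm{Id}$ or other torsion in $\Gamma$ is correctly handled (recall the standing hypothesis $-\mathrm{Id}\notin\Gamma$ when $k$ is odd, and that torsion orders are invertible in $R$). Concretely, if $\gamma\in\Gamma$ stabilizes $u/v\in\P^1(K)$ then $\gamma(u,v) = (\mu u,\mu v)$ for some $\mu\in K^\times$, and the relation $x|_\gamma = x$ in $\B_k(\Gamma)$ imposes exactly $(\mu/|\mu|)^k[\Gamma(u,v)] = [\Gamma(u,v)]$ after applying $\mu$ — so $\mu$ is well-defined on the quotient precisely because the target has been quotiented by the same relation. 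Once this matching of stabilizer relations is pinned down, well-definedness of $\nu$ and the identity $\mu\circ\nu = \mathrm{id}$ follow formally, and the lemma is complete.
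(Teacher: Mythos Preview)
Your argument has a genuine gap in the construction of the inverse $\nu$. The target space $R[\Gamma\backslash K^2]_k$ consists of formal $R$-linear combinations of orbit classes $[\Gamma(u,v)]$; an element such as $P(u,v)[\Gamma(u,v)]$ is just a \emph{scalar} times a generator, and the polynomial $P$ cannot be ``recovered from the homogeneous datum'' as you suggest. To define $\nu$ on a generator $[\Gamma(u,v)]$ you must choose some $P$ with $P(u,v)=1$ and send it to $P\otimes\{u/v\}$, but then well-definedness of $\nu$ requires that \emph{any} polynomial $Q$ with $Q(u,v)=0$ already maps to zero in $\B_k(\Gamma)$. This is the reverse inclusion from the one you verify via stabilizer relations (which only shows that elements $P-P|_{g_\beta}$ vanish at $\beta$, i.e.\ $\ker\psi_\beta\subseteq\ker(\mathrm{eval}_\beta)$, hence $\mu$ is well-defined). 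Your stabilizer discussion and the $\lambda$-scaling check do not touch this point, and the dimension count you invoke presupposes it.

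The paper's proof handles exactly this by decomposing $\B_k(\Gamma)=\bigoplus_\alpha \B_k(\Gamma)_\alpha$ over cusp representatives and arguing that each summand is one-dimensional: the map $\psi_\beta:R_{k-2}[X,Y]\to\B_k(\Gamma)_\beta$, $P\mapsto P\{\beta\}$, has kernel generated by the $P-P|_{g_\beta}$ as $g_\beta$ ranges over the stabilizer of $\beta$ in $\Gamma$, and one then argues that this kernel coincides with $\ker(\mathrm{eval}_\beta)$. The nontrivial direction---that every polynomial vanishing at $\beta$ lies in the span of the $P-P|_{g_\beta}$---uses that the stabilizer of a cusp in a congruence subgroup contains unipotent elements $\left(\begin{smallmatrix}1&b\\0&1\end{smallmatrix}\right)$ (after conjugating $\beta$ to $\infty$), whose action $P(X,Y)\mapsto P(X-bY,Y)$ generates enough relations to kill all of $Y\cdot R_{k-3}[X,Y]$. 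Once each $\B_k(\Gamma)_\alpha$ is one-dimensional, $\dim\B_k(\Gamma)=|\Gamma\backslash\P^1(K)|$ matches the dimension of the target and the well-defined surjection $\mu$ is forced to be an isomorphism. Your proposal would be repaired by inserting precisely this stabilizer computation in place of the attempted direct construction of $\nu$.
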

\begin{proof}
We first decompose $\B_k(\Gamma)$ into the direct sum $\oplus_\alpha \B_k(\Gamma)_\alpha$ as $\alpha$ runs over a set of representatives of $\Gamma\backslash \P^1(K)$, and $\B_k(\Gamma)_\alpha$ is the subspace generated by $P\{\alpha\}, P\in R_{k-2}[X,Y]$. For each $\beta\in \P^1(K)$, we have a surjective linear map
\[
\psi_\beta: R_{k-2}[X,Y] \to \B_k(\Gamma)
\]
sending $P$ to $P\{\beta\}$, whose kernel is generated by polynomials of the form $P - P|_{g_\beta}$, where $g_\beta\in\Gamma$ satisfies $g_\beta\beta= \beta$. It follows then that $(P-P|_{g_\beta})(\beta)=0$, and $\mu$ is well-defined.

On the other hand, since the kernel of $\psi_\beta$ contains the kernel of a linear map, and $\psi_\beta$ is nontrivial for any given $\beta$, it follows that the image of $\psi_\beta$ is one-dimensional. The dimension of $\B_k(\Gamma)$ then is equal to $|\Gamma\backslash\P^1(K)|$, hence $\mu$ is bijective.
\end{proof}

Define the boundary map
\[
\partial: P\otimes \{\alpha,\beta\} \to P\otimes \{\beta\} - P\otimes \{\alpha\},
\]
and define $\SS_k(\Gamma)$ to be the kernel of $\partial$. We shall call $\SS_k(\Gamma)$ the space of cuspidal modular symbols. Also define $\theta$ to be the linear map sending $\lambda\{\alpha\}$ to the element $\lambda$.

\begin{proposition}
\label{exact}
There are exact sequences 
\[
0 \to \SS_k(\Gamma) \to \M_k(\Gamma) \stackrel{\partial}{ \to} \B_k(\Gamma) \to 0
\]
if $k>2$, and
\[
0 \to \SS_k(\Gamma) \to \M_k(\Gamma) \stackrel{\partial}{ \to} \B_k(\Gamma) \stackrel{\theta}{ \to} R \to 0
\]
if $k=2$.
\end{proposition}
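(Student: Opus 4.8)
The plan is to verify exactness of the two sequences position by position, reducing everything to the analogous fact for the abstract groups $\M$, $\B$ generated by geodesics and cusps. First, exactness at $\SS_k(\Gamma)$ is immediate: $\SS_k(\Gamma)$ is \emph{defined} as $\ker\partial$, so injectivity of the inclusion and $\mathrm{im} = \ker$ at that spot are tautological once we check that $\partial$ is a well-defined map on $\M_k(\Gamma)$, i.e. that it respects both the geodesic relations $\{\alpha,\alpha\}=\{\alpha,\beta\}+\{\beta,\gamma\}+\{\gamma,\alpha\}=0$ and the $\Gamma$-coinvariance. The first is clear from the formula $\partial(P\otimes\{\alpha,\beta\}) = P\otimes\{\beta\}-P\otimes\{\alpha\}$ (the three-term relation maps to a telescoping sum that vanishes), and the second because $\partial$ is visibly $G(K)$-equivariant for the given actions on $\M_k$ and $\B_k$, hence descends to coinvariants.

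Next I would treat surjectivity of $\partial$ onto $\B_k(\Gamma)$ when $k>2$, and onto $\ker\theta$ when $k=2$. The key point is that the map $\mu:\B_k(\Gamma)\xrightarrow{\sim} R[\Gamma\backslash K^2]_k$ of the preceding Lemma lets us replace $\B_k(\Gamma)$ by an explicit free-ish module on $\Gamma\backslash\P^1(K)$. To hit a generator $P\otimes\{\beta\}$ one wants a modular symbol whose boundary is $P\otimes\{\beta\}-P\otimes\{\alpha\}$ for some auxiliary cusp $\alpha$; summing such symbols over a spanning set and using the three-term relation reduces surjectivity to the statement that in $\B_k(\Gamma)$ the class of $\sum_\alpha (\text{something})\{\alpha\}$ can be made to equal any prescribed element — precisely when the obstruction in $R$ (for $k=2$) or in the ``degree'' quotient (absent for $k>2$, since then $R[\Gamma\backslash K^2]_k$ carries no canonical augmentation compatible with the polynomial weights) vanishes. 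Concretely, for $k=2$ one checks $\theta\circ\partial=0$ directly from the formula for $\partial$ (each $\{\alpha,\beta\}$ contributes $\{\beta\}-\{\alpha\}\mapsto 1-1=0$), and then that $\ker\theta$ is exactly the image, by connecting any two cusps by a chain of geodesics — this is where one uses that $\P^1(K)$ is ``connected'' through $G(\O)$-translates of $\{0,\infty\}$, i.e. the continued-fraction / Euclidean algorithm over $\O$, which is available precisely because $K$ is Euclidean. For $k>2$ the analogous chain argument needs no correction term because the polynomial coefficients decouple the cusps; exactness of the abstract complex $0\to\ker\partial\to\M\xrightarrow{\partial}\B\xrightarrow{\theta}R\to 0$ then transports to the $\Gamma$-coinvariants since, by the torsion hypothesis on $R$, taking $\Gamma$-coinvariants is exact on these particular modules (the relevant $H_1$ terms vanish, which is also what underlies the identification \eqref{moh}).

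The cleanest route, and the one I would actually write, is to identify all three spaces with group homology: by \eqref{moh} we have $\M_k(\Gamma)\simeq H_1(\Gamma,V_{k,k})$, and a parallel computation (Borel–Serre, or directly) identifies $\B_k(\Gamma)$ with the boundary homology $H_1(\partial X_\Gamma,\bar\V)$ (equivalently, with $\bigoplus$ over cusps of coinvariants of $V_{k,k}$ under the stabilizer), $\SS_k(\Gamma)$ with $H_1$ of the Borel–Serre pair, and for $k=2$ the final $R$ with $H_0$. The two claimed exact sequences are then just the long exact sequence of the pair $(X_\Gamma,\partial X_\Gamma)$, truncated appropriately: for $k>2$ the relevant $H_0$ terms of the sheaf $\bar\V$ on $\partial X_\Gamma$ and $X_\Gamma$ vanish (no invariants under the unipotent boundary stabilizers when $k>2$), so the sequence closes off at $\B_k(\Gamma)$; for $k=2$, $\bar\V$ is constant and those $H_0$'s are copies of $R$, producing the extra term $\B_2(\Gamma)\xrightarrow{\theta}R\to 0$ (and $H_0(X_\Gamma)=R$ kills the cokernel). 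This is morally a restatement of Proposition 4.2-type results in the classical (Merel, Stein) setting.

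The main obstacle is the $k>2$ surjectivity of $\partial$ and, dually, pinning down exactly which vanishing makes the sequence terminate at $\B_k(\Gamma)$ rather than needing a correction term: one must show that no analogue of the $k=2$ augmentation survives. I expect this to come down to the fact that for $k>2$ the stabilizer of a cusp acts on $V_{k,k}$ with no nonzero coinvariants in the relevant sense — a unipotent-orbit computation on homogeneous polynomials of positive degree — together with the Euclidean-ness of $K$ to guarantee that geodesic chains connect all cusps. Everything else is formal: well-definedness of $\partial$ and $\theta$, the tautology at $\SS_k(\Gamma)$, and the transport of exactness through $\Gamma$-coinvariants under the stated invertible-torsion hypothesis.
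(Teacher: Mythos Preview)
Your proposal is essentially correct and, in its direct half, tracks the argument that the paper defers to (Merel, Proposition~5, adapted as in Mohamed, Lemma~2.3): check that $\partial$ is well-defined and $G(K)$-equivariant, use the definition of $\SS_k(\Gamma)$ for exactness on the left, and for surjectivity exploit that when $k>2$ one can choose $P$ (e.g.\ $P=X^{k-2}$) with $P(1,0)=1$, $P(0,1)=0$, so that by the boundary formula $\mu(\partial([P,g]))=[\Gamma g(1,0)]$ hits a single generator of $R[\Gamma\backslash K^2]_k$; for $k=2$ the constant coefficient forces the image into $\ker\theta$, and any $\sum c_i\{\alpha_i\}$ with $\sum c_i=0$ equals $\sum c_i\,\partial(\{\alpha_0,\alpha_i\})$.

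Two remarks. First, the appeal to the Euclidean algorithm and ``chains of geodesics'' is a red herring here: for exactness at $\B_k(\Gamma)$ you only ever need a single geodesic $\{\alpha_0,\alpha_i\}$, which exists in $\M$ by definition; the Euclidean hypothesis is used elsewhere in the paper (to make Manin symbols generate) but not for this proposition. Second, your alternative route via the long exact sequence of the Borel--Serre pair $(X_\Gamma,\partial X_\Gamma)$ is a genuinely different and more conceptual argument than the one the paper cites; it cleanly explains the $k=2$ versus $k>2$ dichotomy through the (non)vanishing of $H_0(\partial X_\Gamma,\bar\V)$, at the cost of having to justify the identifications of $\M_k(\Gamma)$, $\B_k(\Gamma)$, $\SS_k(\Gamma)$ with the relevant homology groups and to control the coinvariants of $V_{k,k}$ under unipotent cusp stabilizers. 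Either route suffices.
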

\begin{proof}
This follows from a simple adaptation of the arguments of \cite[Proposition 5]{merel} and \cite[Lemma 2.3]{moh}.
\end{proof}

We shall also require the following formula.
\begin{lemma}
\label{boundary}
For any $P\in R_{k-2}[X,Y], g\in G(\O)$, the identity
\be
\mu(\partial([P,g])) = P(1,0)[\Gamma g(1,0)] - P(0,1)[\Gamma g(0,1)]
\ee
holds in $R[\Gamma\backslash K^2]_k$.
\end{lemma}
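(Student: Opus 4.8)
The plan is to unwind the definitions on both sides and check that the Manin symbol notation $[P,g]$ really does represent the modular symbol $P\{g0,g\infty\}$, so that applying $\partial$ and then $\mu$ produces exactly the claimed expression. Recall that for $g=\left(\begin{smallmatrix}a&b\\c&d\end{smallmatrix}\right)$, the cusps $g\infty$ and $g0$ are the columns of $g$, namely $g\infty = \tfrac{a}{c}$ and $g0 = \tfrac{b}{d}$; equivalently, in the $K^2$-notation, $g(1,0) = (a,c)$ and $g(0,1)=(b,d)$. So first I would recall the identification $[P,g] = P|_{g}\{g0,g\infty\}$ (or whichever normalization the paper fixes for Manin symbols in Section~\ref{B}), being careful about the direction of the slash action: since $P|_g(X,Y) = P(dX-bY,-cX+aY)$, one has $P|_g(1,0) = P(d,-c)$ and $P|_g(0,1) = P(-b,a)$, and I would match these against the values $P(1,0)$, $P(0,1)$ appearing on the right-hand side by tracking how the polynomial transforms under the relevant group element. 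The cleanest route is to observe that evaluating $P|_g$ at $(1,0)$ and $(0,1)$ and then pushing forward the cusps $g\infty, g0$ under $\mu$ gives terms of the form (polynomial value)$\cdot[\Gamma g(\ast)]$, and these are precisely $P(1,0)[\Gamma g(1,0)]$ and $P(0,1)[\Gamma g(0,1)]$ after using $g(1,0)$ and $g(0,1)$ for the two cusps.

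Concretely, the key steps in order are: (1) expand $\partial([P,g])$ using the definition $\partial(P\otimes\{\alpha,\beta\}) = P\otimes\{\beta\} - P\otimes\{\alpha\}$, applied to the modular symbol that $[P,g]$ denotes; (2) apply the isomorphism $\mu$ from the preceding lemma, which sends $Q\otimes\{\tfrac{u}{v}\}$ to $Q(u,v)[\Gamma(u,v)]$, to each of the two boundary terms, writing the cusps $g\infty$ and $g0$ in homogeneous coordinates as the columns $g(1,0)$ and $g(0,1)$ of $g$; (3) simplify the polynomial factors: here one uses the compatibility of the $G(K)$-action on $R_{k-2}[X,Y]$ with evaluation, i.e.\ that $(P|_g)(1,0)$ and $(P|_g)(0,1)$ are the same as $P$ evaluated at the appropriate transformed points, so that after the dust settles the coefficients are literally $P(1,0)$ and $P(0,1)$; (4) collect signs, noting that $\partial$ gives $\{g0\} - \{g\infty\}$ (or the reverse, depending on convention), which produces the minus sign between the two terms in the displayed identity. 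Since everything is a finite formal computation in $R[\Gamma\backslash K^2]_k$, no convergence or analytic input is needed.

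The main obstacle, such as it is, is purely bookkeeping: getting the normalization of the Manin symbol $[P,g]$ and the direction of the slash action exactly consistent with the conventions fixed earlier in Section~\ref{B}, so that the polynomial coefficients come out as $P(1,0)$ and $P(0,1)$ rather than, say, $P(d,-c)$ and $P(-b,a)$ or some twisted version thereof. In particular one must check that the relation defining $R[\Gamma\backslash K^2]_k$, namely $[\Gamma(\lambda u,\lambda v)] \sim (\lambda/|\lambda|)^k[\Gamma(u,v)]$, is respected — i.e.\ that rescaling the homogeneous coordinates of the cusp is absorbed correctly by the degree-$(k-2)$ polynomial together with the two extra factors built into $\mu$ — so that the right-hand side is genuinely well-defined and independent of the chosen representatives of $g(1,0)$ and $g(0,1)$ in $K^2$. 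Once the conventions are pinned down this is immediate; I would state it as a one-line verification referencing the well-definedness of $\mu$ already established above.
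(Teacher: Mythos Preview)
Your approach is exactly the paper's: unwind $[P,g]=P|_g\otimes\{g0,g\infty\}$, apply $\partial$, then apply $\mu$. The paper's proof is two lines and does precisely this.

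One bookkeeping slip to fix: in your step (3) you evaluate $P|_g$ at $(1,0)$ and $(0,1)$, obtaining $P(d,-c)$ and $P(-b,a)$, and then worry about how to reconcile these with $P(1,0)$ and $P(0,1)$. But that is not what $\mu$ does. The map $\mu$ sends $Q\otimes\{u/v\}$ to $Q(u,v)[\Gamma(u,v)]$, so with $Q=P|_g$ and cusps $g\infty=a/c$, $g0=b/d$, you must evaluate $P|_g$ at $(a,c)$ and $(b,d)$, not at $(1,0)$ and $(0,1)$. Then
\[
(P|_g)(a,c)=P(da-bc,\,-ca+ac)=P(\det g,0)=P(1,0),\qquad (P|_g)(b,d)=P(0,\det g)=P(0,1),
\]
using $\det g=1$. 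So the coefficients $P(1,0)$ and $P(0,1)$ drop out immediately, with no further matching or rescaling needed; your concern about the $[\Gamma(\lambda u,\lambda v)]\sim(\lambda/|\lambda|)^k[\Gamma(u,v)]$ relation does not actually arise here, since $(a,c)$ and $(b,d)$ are already the canonical column representatives and $\mu$ has been shown well-defined.
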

\begin{proof}
This follows from the property
\begin{align*}
\partial([P,g]) &= \partial\left(P|_g\otimes \left\{\dfrac{b}{d},\dfrac{a}{c}\right\}\right) \\
&= P|_g\otimes \left\{\dfrac{a}{c}\right\} - P|_g\otimes \left\{\dfrac{b}{d}\right\}, \quad g = \begin{pmatrix} a & b \\ c & d \end{pmatrix}
\end{align*}
and the definition of $\mu$.
\end{proof}

\subsection{Manin symbols}

Let $Y^*_\Gamma= \Gamma\backslash \H_3^*$ be the Satake compactification of $Y_\Gamma$, where $\H_3^* = \H_3 \cup \P^1(K)$. The modular symbol $\{\alpha,\beta\}$ can be viewed as the homology class of the relative cycle induced by the geodesic joining the cusps $\alpha$ and $\beta$ in $Y_\Gamma^*$. Choose a finite covering of $Y^*_\Gamma$ by the images of open principal hemispheres, and let $\CC = \{a_1,\dots,a_n\}$ be the centers of these hemispheres. Define the Manin symbol with respect to $a\in\CC$ to be
\[
[P,g,a]=P|_g\otimes \{ga,g\infty\}\in\M_k(\Gamma).
\]
Since $K$ is Euclidean, one can choose $\CC=\{0\}$, and we simply write $[P,g] = [P,g,0]$.

Following \cite[Theorem 2]{C}, the Manin symbols can be used to generate the homology group $H_1(Y^*_\Gamma,\Q)$ modulo relations determined by a certain relational ideal $\R$. If we set 
\be\label{R0}
I = \begin{pmatrix}1  &0\\0&1 \end{pmatrix}, \ J = \begin{pmatrix}\varepsilon &0\\0&1 \end{pmatrix}, \ S = \begin{pmatrix}0&-1\\1&0 \end{pmatrix},\  TS = \begin{pmatrix}1&-1\\1&0 \end{pmatrix}, 
\ee
where $\varepsilon$ is the fundamental unit of $K$, then for each Euclidean $K$ the ideal $\R$ contains the relations
\[
\R_0 = \langle I+S, I-J, I + (TS) + (TS)^2\rangle
\]
and the remaining relations are detailed in \cite[p.295]{C}. For example, if $d=1$, one has
\be
\label{R1}
\R = \R_0 + \langle I + X + X^2\rangle, \quad X =  \begin{pmatrix}i &1 \\1&0 \end{pmatrix}
\ee
where $i=\sqrt{-1}$.

\begin{proposition}
The Manin symbols generate  $\M_k(\Gamma)$, and satisfy relations determined by $\R$, for example, the ideal $\R_0$ gives
\[
[P,g] + [P|_{S^{-1}}, gS] = 0, \quad [P,g] = [P|_{J^{-1}},  gJ] ,
\]
\[
[P,g] + [P|_{(TS)^{-1}},g(TS)] + [P|_{(TS)^{-2}},g(TS)^2] = 0.
\]
\end{proposition}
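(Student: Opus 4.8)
The plan is to verify the two claims separately: first that the Manin symbols $[P,g]$ span $\M_k(\Gamma)$, and second that they satisfy the listed relations coming from $\R_0$. For the spanning statement, I would start from the presentation of $\M_k(\Gamma)$ by symbols $P\{\alpha,\beta\}$ modulo the triangle relations and the $\Gamma$-action. Using additivity $P\{\alpha,\beta\} = P\{\alpha,\infty\} + P\{\infty,\beta\}$, it suffices to express each $P\{\alpha,\infty\}$ (equivalently $P\{0,\infty\}$ translated) as a $\Z$-combination of Manin symbols. Since $K$ is Euclidean, I would run the continued-fraction/Euclidean algorithm on $\alpha \in \P^1(K)$: this produces a finite sequence of cusps $\alpha = p_0/q_0, p_1/q_1, \dots, p_r/q_r = \infty$ with consecutive terms unimodular, i.e. $\bigl(\begin{smallmatrix} p_{i} & p_{i+1} \\ q_i & q_{i+1}\end{smallmatrix}\bigr) \in G(\O)$ (this is exactly the mechanism used by Cremona \cite{C} and Mohamed \cite{moh} in the Euclidean cases). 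Each segment $\{p_i/q_i, p_{i+1}/q_{i+1}\}$ is then, after applying a suitable $g_i \in G(\O)$, of the form $\{g_i 0, g_i\infty\}$, so $P\{p_i/q_i,p_{i+1}/q_{i+1}\}$ with the matching polynomial twist is a Manin symbol $[P|_{g_i^{-1}}, g_i]$. Telescoping the triangle relations over $i$ then writes the original symbol as a finite sum of Manin symbols, giving the spanning claim.

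For the relations, I would simply compute. Recall $[P,g] = P|_g \otimes \{g0, g\infty\}$. For the first relation, apply the definition with $g$ replaced by $gS$ and $P$ by $P|_{S^{-1}}$: since $S = \bigl(\begin{smallmatrix} 0 & -1 \\ 1 & 0\end{smallmatrix}\bigr)$ we have $S\cdot 0 = \infty$ and $S\cdot\infty = 0$, so $gS\cdot 0 = g\infty$ and $gS\cdot\infty = g0$; hence $[P|_{S^{-1}}, gS] = (P|_{S^{-1}})|_{gS} \otimes \{g\infty, g0\} = P|_g \otimes \{g\infty, g0\} = -[P,g]$, using $(P|_{S^{-1}})|_{gS} = P|_{S^{-1}gSg^{-1}\cdot g}$... more simply, $|_{S^{-1}}$ followed by $|_{gS}$ equals $|_{(gS)(S^{-1})} = |_g$ on account of the contravariance/covariance convention fixed above, together with the antisymmetry $\{g\infty,g0\} = -\{g0,g\infty\}$. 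For the $J$-relation, note $J = \bigl(\begin{smallmatrix}\varepsilon & 0 \\ 0 & 1\end{smallmatrix}\bigr)$ fixes both $0$ and $\infty$, so $gJ\cdot 0 = g0$, $gJ\cdot\infty = g\infty$, and $[P|_{J^{-1}}, gJ] = (P|_{J^{-1}})|_{gJ}\otimes\{g0,g\infty\} = P|_g\otimes\{g0,g\infty\} = [P,g]$. For the three-term $TS$-relation, one uses that $TS = \bigl(\begin{smallmatrix}1 & -1 \\ 1 & 0\end{smallmatrix}\bigr)$ has order $3$ (as $\varepsilon^{\pm 1}\cdot(\text{power})$, i.e. $(TS)^3 = I$ in $\mathrm{PSL}_2$), permuting the three cusps $\infty \mapsto 0 \mapsto -1 \mapsto \infty$ (or the appropriate triple), so that the geodesic triangle with these vertices has boundary $\{g0,g\infty\} + \{g(TS)0, g(TS)\infty\} + \{g(TS)^2 0, g(TS)^2\infty\} = 0$ by the triangle relation; reinstating the polynomial twists gives exactly $[P,g] + [P|_{(TS)^{-1}}, g(TS)] + [P|_{(TS)^{-2}}, g(TS)^2] = 0$.

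The main obstacle is bookkeeping with the two competing conventions: the slash action on polynomials is defined contravariantly, $P|_g(X,Y) = P(dX - bY, -cX + aY)$, while the action on the cusp pair is the naive left action $\{\alpha,\beta\}\mapsto\{g\alpha,g\beta\}$, and one must check that $(P|_h)|_g = P|_{gh}$ rather than $P|_{hg}$ so that the composite twist in $[P|_{h^{-1}}, gh]$ collapses correctly to $P|_g$. I would pin this down once at the start (it follows from $\bigl(\begin{smallmatrix}a&b\\c&d\end{smallmatrix}\bigr)^{-1} = \bigl(\begin{smallmatrix}d&-b\\-c&a\end{smallmatrix}\bigr)$ up to determinant, which is why $P|_g$ is "$P$ evaluated at $g^{-1}$ applied to $(X,Y)$") and then the three relations drop out mechanically. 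A secondary subtlety is the parity/torsion hypothesis: for odd $k$ one needs $-\mathrm{Id}\notin\Gamma$ for $\M_k(\Gamma)$ to be the right object and for the $J$- and $S$-relations to be stated without an extra sign, but this is already assumed in the setup of Bianchi modular symbols, so no further argument is needed. The spanning argument's only real input — termination of the Euclidean algorithm on $\P^1(K)$ — is exactly the Euclidean hypothesis on $K$, and is the content cited from \cite{C, moh}; so I would invoke it rather than reprove it. Everything else is a direct appeal to Proposition~\ref{exact}'s underlying relations and the definitions.
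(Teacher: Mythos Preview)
Your proposal is correct and follows essentially the same route as the paper: you invoke the Euclidean continued-fraction algorithm (citing \cite{C,moh}) for the spanning claim, and you verify the relations by direct computation using $S\cdot 0=\infty$, $J$ fixing $0,\infty$, and the order-$3$ action of $TS$ on a cusp triple, exactly as the paper does (the paper exhibits only the $S$-computation and refers to \cite{merel} for the others). Your explicit check that the slash convention gives $(P|_h)|_g = P|_{gh}$ is the right bookkeeping step and matches the paper's line $(P|_{S^{-1}})|_{gS}=P|_{gSS^{-1}}=P|_g$; one cosmetic correction is that $TS$ cycles $\{0,1,\infty\}$ rather than $\{0,-1,\infty\}$, but you already hedged this and it does not affect the argument.
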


\begin{proof}
The first assertion follows from the Euclidean algorithm for $\O$ as demonstrated in the proof of \cite[Theorem 2]{C} (see also \cite[Theorem 3.5.7]{powell}), generalizing Manin's trick using the continued fraction expansion to this setting. In the second case, the relations follow from the properties of Manin symbols such as in the proof of \cite[Proposition 1]{merel}. We demonstrate this with the first formula. One checks that
\begin{align*}
[P,g] + [P|_{S^{-1}}, gS] &= P|_g \otimes \{g0,g\infty\} + (P|_{S^{-1}})|_{gS} \otimes \{gS0,gS\infty\}\\
&= P|_g \otimes \{g0,g\infty\} + P|_{gSS^{-1}} \otimes \{g\infty, g0\}\\
&=0,
\end{align*}
since $\infty = S0 = S^2\infty$ in $\P^1(K)$. 
\end{proof}

Note that if $\Gamma g_1 = \Gamma g_2$, then $[P,g_1]= [P,g_2]$ since the symbol is invariant by the left action of $\Gamma$. And since $\Gamma$ is a subgroup of finite index, the abelian group generated by Manin symbols is also of finite rank, generated by
\[
[X^{k-2-i} Y^i, g_j], \qquad 0\le i \le k-2,
\]
and $g_j$ runs through representatives of right cosets $\Gamma\backslash G(\O)$.  It follows then that $\M_k(\Gamma)$ is a finite dimensional vector space, and there is a natural map
\[
R_{k-2}[X,Y][\Gamma\backslash G(\O)]\to \M_k(\Gamma)
\]
sending the element $P[\Gamma g]$ to the Manin symbol $[P,g].$

\subsection{Pairing}

Let $\overline{S_k(\Gamma)} $ be the space spanned by $\overline{f(w)} $ for  each $f\in S_k(\Gamma)$. By \eqref{harder} and \eqref{moh}, there is a well-defined pairing  \[
(S_k(\Gamma) \oplus \overline{S_k(\Gamma)}) \times \M_k(\Gamma) \to \C
\]
 induced by the evaluation pairing. More precisely, define the pairing 
given by
\be
\label{deg}
\langle f_1 + f_2 , P\otimes \{\alpha,\beta\}\rangle = \int_\alpha^\beta f_1(w) P(w,1) dw + \int_\alpha^\beta f_2(w) P(\bar{w},1) d\bar{w}.
\ee
The map is in general degenerate, and the elements $m \in\M_k(\Gamma)$ such that $\langle f,m\rangle=0$ for all $f\in (S_k(\Gamma) \oplus \overline{S_k(\Gamma)}) $ are called Eisenstein elements. 

We then require the following assumption: The pairing \eqref{deg}  when restricted to
\be
\label{ass}
(S_k(\Gamma) \oplus \overline{S_k(\Gamma)})  \times \SS_k(\Gamma) \to \C
\ee
is nondegenerate. In the classical case, this is proved using the theory of Shokurov symbols using the cohomology of certain modular Kuga varieties \cite[Theorem 0.2]{shok}.

\section{Computing the Hecke action}
\label{C}

\subsection{$C_\Delta$ families}

Let $\Gamma$ be a congruence subgroup of $G$, and $\Delta$ a subgroup of Mat$_2(\O)$ such that $\Gamma\Delta = \Delta\Gamma = \Delta$ and $\Gamma\backslash\Delta$ is finite. Fix a finite set $R$ of representatives for $\Gamma\backslash\Delta$. There is a well-defined linear map
$T_\Delta: \M_k(\Gamma) \to \M_k(\Gamma)$ given by 
\[
T_\Delta: P\otimes \{\alpha,\beta\} \mapsto  \sum_{\delta\in R}  P|_\delta \otimes \{\delta\alpha,\delta\beta\}.
\]
It is independent of the choice of representatives $R$, and it can be shown in fact that $T_\Delta$ maps $\SS_k(\Gamma)$ to itself. Also define the Shimura involution 
\[
g = \begin{pmatrix} a & b\\ c & d\end{pmatrix} 
\mapsto 
 \tilde{g} = \begin{pmatrix} d & -b\\ -c & a\end{pmatrix} = g^{-1}\det(g)
\]
and denote $\tilde{\Delta}=\{g \in GL_2(K) : \tilde{g}\in \Delta\}$. Suppose there exists a map 
\[
\phi: \tilde{\Delta}G(\O)\to G(\O)
\]
such that the pair $(\phi,\Delta)$ satisfies the following conditions:
\begin{enumerate}
\item 
for all $\gamma\in\tilde{\Delta}G(\O)$ and $g\in G(\O)$, we have $\Gamma\phi_\Delta(\gamma g) = \Gamma\phi_\Delta(\gamma)g,$\\
\item 
for all $\gamma\in\tilde{\Delta}G(\O)$, we have $\gamma\phi_\Delta(\gamma)^{-1}\in\tilde{\Delta},$ or equivalently, $\phi(\gamma)\tilde{\gamma}\in\Delta,$\\
\item 
the map $\Gamma\backslash\Delta \to\tilde{\Delta}G(\O)/G(\O)$ given by $\Gamma \delta \mapsto \tilde{\delta} G(\O)$ is injective. Note that it is also necessarily surjective.
\end{enumerate}
These conditions ensure that summing over $g\tilde{\Delta}G(\O)$ is the same as summing over the quotient $\Gamma\backslash \Delta$.

We say that an element 
\[
\sum_M u_M M \in R[\text{Mat}_2(\O)].
\]
satisfies condition $(C_\Delta)$ if and only if for all $C\in\tilde{\Delta}G(\O)/G(\O)$ we have the equality in $R[\P^1(K)]$
\[
[\infty] - [0]=\sum_{M\in C}u_M ([M\infty] - [M0]) .
\]
We then have the following formula for the Hecke action on Manin symbols. 
\begin{proposition}
\label{merel4}
Let $P\in R_{k-2}[X,Y]$ and $g\in G(\O)$. Given an element $\sum_M u_M M\in R[\mathrm{Mat}_2(\O)]$ satisfying condition $(C_\Delta)$, and a $(\phi,\Delta)$ pair as above, we have an equality in $\M_k(\Gamma)$,
\[
T_\Delta([P,g]) = \sum_M u_M[P|_{\tilde{M}},\phi(gM)]
\]
where the sum runs over $M$ such that $gM\in \tilde{\Delta}G(\O)$.
\end{proposition}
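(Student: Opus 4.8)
The plan is to reduce the statement to a manipulation of modular symbols modulo the $\Gamma$-action, following the template of Merel's Proposition 10 in \cite{merel}, but carefully tracking the Shimura involution and the $(\phi,\Delta)$ pair. First I would unwind the definitions: by construction $[P,g] = P|_g\otimes\{g0,g\infty\}$, so $T_\Delta([P,g]) = \sum_{\delta\in R} (P|_g)|_\delta\otimes\{\delta g0,\delta g\infty\}$. The key is to re-index this sum over $\Gamma\backslash\Delta$ using condition (3) of the $(\phi,\Delta)$ pair, which gives a bijection $\Gamma\backslash\Delta\to\tilde\Delta G(\O)/G(\O)$, $\Gamma\delta\mapsto\tilde\delta G(\O)$. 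Thus summing over representatives $\delta$ of $\Gamma\backslash\Delta$ is the same as summing over cosets $C\in\tilde\Delta G(\O)/G(\O)$; and since $g$ is fixed and invertible over $K$, the cosets $C$ that contribute are exactly those of the form $C = gM\cdot G(\O)$ with $gM\in\tilde\Delta G(\O)$, i.e. precisely the $M$ appearing in the sum on the right-hand side.

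Next I would insert the $(C_\Delta)$ hypothesis. Fixing a coset $C = gM\,G(\O)\in\tilde\Delta G(\O)/G(\O)$, condition $(C_\Delta)$ asserts $[\infty]-[0] = \sum_{N\in C} u_N([N\infty]-[N0])$ in $R[\P^1(K)]$. Applying the linear boundary operator and the identity $\partial(P|_h\otimes\{h0,h\infty\})$-type relations (via Lemma \ref{boundary}), together with the fact that a modular symbol is determined by its values on the cusps once one uses the relations defining $\M$, one rewrites the single geodesic $\{g0,g\infty\}$ attached to the representative $\delta$ with $\tilde\delta G(\O) = C$ as the telescoping sum $\sum_{N\in C} u_N\{gN0,gN\infty\}$, weighted by the polynomial part. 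Concretely, I expect the computation to produce, for the contribution of the coset $C$,
\[
\sum_{N\in C} u_N\, (P|_g)|_{?}\otimes\{gN0,gN\infty\},
\]
and the point is to identify the matrix in the slash. Here is where conditions (1) and (2) enter: writing $\delta$ for the representative of $\Gamma\backslash\Delta$ with $\tilde\delta\in gM\,G(\O)$, property (2) gives $\phi(gM)\widetilde{gM}\in\Delta$, hence $\Gamma\delta = \Gamma\,\widetilde{\phi(gM)}\det(\cdots)^{-1}$-type relations let one replace $\delta$ by $\widetilde{\phi(gM)}$ up to $\Gamma$ and up to the central scalar $\det$, and property (1) ensures this is compatible with the right $G(\O)$-action so that $\Gamma\phi(gM) $ is well-defined independent of the choice of $M$ in its coset. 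Chasing the slash operators: $(P|_g)|_{\tilde N}$ accumulates along the telescoping sum, and after using $\tilde\delta = \widetilde{gM}$ in $\Gamma\backslash$ and the identity $P|_g|_{\widetilde{gM}} = P|_{\tilde M}|_{\text{(something in }\Gamma)}$ modulo $\Gamma$-invariance, one lands on $P|_{\tilde M}\otimes\{\phi(gM)0,\phi(gM)\infty\} = [P|_{\tilde M},\phi(gM)]$.

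Summing over all cosets $C$, i.e. over all $M$ with $gM\in\tilde\Delta G(\O)$, yields $T_\Delta([P,g]) = \sum_M u_M[P|_{\tilde M},\phi(gM)]$, which is the claim. The main obstacle I anticipate is the bookkeeping around the Shimura involution and the central determinant factor: in the Bianchi setting the matrices in $\Delta$ need not have determinant $\pm1$, so one must check that the normalization $\tilde g = g^{-1}\det(g)$ interacts correctly with the $R[\Gamma\backslash K^2]_k$-type scaling relation $[\lambda u,\lambda v]\sim(\lambda/|\lambda|)^k[u,v]$, and that condition $(C_\Delta)$ as stated (purely in $R[\P^1(K)]$, ignoring the polynomial weight) suffices to control the weight-$k$ part — this is exactly the step where Merel's argument uses that the polynomial $P(X,Y)$ transforms predictably and that the geodesic decomposition is compatible with the $G$-action on $R_{k-2}[X,Y]$. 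I would isolate this as a short lemma before assembling the proof, and otherwise the argument is a direct adaptation of \cite[Proposition 10]{merel} and \cite[Section 3]{moh}.
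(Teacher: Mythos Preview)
Your overall strategy is correct and matches the paper's: use property (3) to identify the index sets, insert condition $(C_\Delta)$ coset by coset, and use properties (1) and (2) to match the polynomial and the $\phi$-argument. Two points deserve sharpening.

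First, the paper runs the computation in the opposite direction, starting from the right-hand side, and this makes the polynomial bookkeeping transparent. Pick representatives $s$ for $g^{-1}\tilde\Delta G(\O)/G(\O)$; for $M\in sG(\O)$ property (1) gives $\Gamma\phi(gM)=\Gamma\phi(gs)(s^{-1}M)$, so each term $[P|_{\tilde M},\phi(gM)]$ becomes $P|_{\phi(gs)\,s^{-1}M\tilde M}\otimes\{\phi(gs)s^{-1}M0,\phi(gs)s^{-1}M\infty\}$. The identity you are looking for in place of your ``?'' is simply
\[
s^{-1}M\tilde M \;=\; s^{-1}\det(M)\,I \;=\; \det(s)\,s^{-1} \;=\; \tilde s,
\]
since $M\in sG(\O)$ forces $\det M=\det s$. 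Thus the polynomial factor $P|_{\phi(gs)\tilde s}$ is \emph{constant} across the coset, and condition $(C_\Delta)$, which is purely a statement in $R[\P^1(K)]$, collapses the inner sum over $M\in sG(\O)$ to a single geodesic. Rewriting $\phi(gs)\tilde s=\phi(gs)\tilde s\tilde g\cdot g$, property (2) gives $\phi(gs)\widetilde{(gs)}\in\Delta$, and property (3) says these elements run over a set of representatives of $\Gamma\backslash\Delta$ as $s$ varies, yielding $T_\Delta([P,g])$. Your version, going from $T_\Delta$ to the Manin-symbol sum, works too, but you would still need exactly this identity to see that the polynomial weight is compatible with the telescoping.

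Second, your anticipated obstacle is a red herring. Lemma \ref{boundary} and the scaling relation $[\lambda u,\lambda v]\sim(\lambda/|\lambda|)^k[u,v]$ live in the world of boundary symbols $\B_k(\Gamma)$ and $R[\Gamma\backslash K^2]_k$; they play no role here. The determinant factor in $\tilde g=\det(g)g^{-1}$ is absorbed entirely by the displayed identity above, and condition $(C_\Delta)$ really does suffice in weight $k$ precisely because the polynomial part is constant on each coset. No auxiliary lemma is needed.
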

\begin{proof}
Let $S$ be a set of representatives of $g^{-1}\tilde{\Delta}G(\O)/G(\O)$. Then the right hand side can be written as 
\begin{align*}
&\sum_{s\in S}\sum_{M\in sG(\O)} u_M P|_{\phi(gM)\tilde{M}}\otimes \{\phi(gM)0,\phi(gM)\infty\}\\
&= \sum_{s\in S}\sum_{M\in sG(\O)} u_M P|_{\phi(gs)s^{-1}M\tilde{M}}\otimes \{\phi(gs)s^{-1}M0,\phi(gs)s^{-1}M\infty\}
\end{align*}
since $s^{-1}M\in G(\O)$ and property (1) of the $(\phi,\Delta)$ family. Using condition $(C_\Delta)$ the definition of modular symbols this is equal to 
\[
\sum_{s\in S}u_M P|_{\phi(gs)s^{-1}M\tilde{M}}\otimes \{\phi(gs)s^{-1}0,\phi(gs)s^{-1}\infty\}.
\]
Since $\det(s)=\det(M),$ we have $s^{-1}M\tilde{M}=\tilde{s},$  thus we write the above as 
\[
\sum_{s\in S}u_M P|_{\phi(gs)\tilde{s}\tilde{g}g}\otimes \{\phi(gs)\tilde{s}\tilde{g}g0,\phi(gs)\tilde{s}\tilde{g}g\infty\}.
\]
By property (2) of the map $\phi$ we have $\phi(gs)\tilde{s}\tilde{g}\in\Delta,$ and by property (3) we have that $\phi(gs)\tilde{s}\tilde{g}$ runs through a set of representatives of $\Gamma\backslash\Delta$ as $s$ runs through $S$. Thus we conclude that the above is equal to 
\[
\sum_{\delta\in R} \delta[P,g]
\]
as desired.
\end{proof}

We refer the reader to \cite[Theorem 3.2]{moh} for the analogous result.

\subsection{Heilbronn-Merel families}

We recall the Heilbronn-Merel families constructed by Mohamed that satisfy condition $C_\Delta$ \cite[Section 3.3]{moh}. Since we are assuming $K$ is Euclidean, it is constructed as a subset of 
\[
\X_\eta \subset  \left\{ \begin{pmatrix}a& b\\ c & d\end{pmatrix}: N(a) > N(b) \ge0, N(d) > N(c) \ge 0, ad - bc = \eta\right\}.
\]
Let $\D$ be the set of divisors of $\eta$, where we identify two divisors $\delta$ and $\delta'$ if they are associate, that is, $\delta,\delta'$ are distinct in $\D$ if $\delta/\delta'\not\in\O^\times.$ Then given $\delta\in\D$, choose a set of representatives $S_\delta$ of $\O/\delta\O$ such that for each $\beta\in S_\delta$, we have $N(\beta)<N(\delta)$. Given a set of representatives $A_\delta$ of $\O/\delta\O$, for each $\alpha\in A_\delta$ let $\alpha'$ be the remainder of the division of $\alpha$ by $\delta$. Then the set of $\alpha'$ satisfies the desired property of $S_\delta$.

The Heilbronn-Merel matrices of determinant $\eta$ are then constructed as follows. Given $\beta\in S_\delta$, let $x_0=\delta, x_1=\beta, y_0=0,$ and $y_1=\eta/\delta$. Form the matrices
\[
M_1 = \begin{pmatrix} x_0& x_1\\ y_0& y_1\end{pmatrix} , \quad M_1 = \begin{pmatrix} x_1& x_2\\ y_1& y_2\end{pmatrix} 
\]
where 
\[
x_2 = x_1q_1 - x_0, \quad y_2 = y_1 q_1 - y_0.
\]
Here $-x_2$ is a remainder obtained from the division of $x_0$ by $x_1$. Both $M_1,M_2$ have determinant $\eta$, and $N(x_2)\le \epsilon N(x_1)$, where
\[
\epsilon = \begin{cases}\left(\dfrac{1+d}{4\sqrt{d}}\right)^2 & \text{if } d\equiv 3 \bmod 4 \\ \dfrac{1+d}{4}&\text{otherwise}.\end{cases}
\]
and $K=\Q(\sqrt{-d})$. More generally, from a matrix 
\[
M_i = \begin{pmatrix}x_{i-1}&x_i\\y_{i-1}&y_i\end{pmatrix},\quad x_i=x_{i-1}q_{i-1} - x_{i-2}, y_i=y_{i-1}q_{i-1} - y_{i-2}
\]
we form the matrix
\[
M_{i+1} = \begin{pmatrix}x_{i}&x_{i+1}\\y_{i}&y_{i+1}\end{pmatrix},\quad x_{i+1}=x_{i}q_{i} - x_{i-1}, y_{i+1}=y_{i}q_{i} - y_{i-1},
\]
and stop once the remainder is zero. The matrices $M_{i}$ and $M_{i+1}$ are more succinctly related by the equality 
\[M_{i+1}= M_i\begin{pmatrix}0&-1\\1&q_1\end{pmatrix},
\]
so we may describe the Heilbronn-Merel families as the sets 
\begin{align}
\X_{\eta} = \bigcup_{\delta\in\D}\bigcup_{\beta\in S_\delta} \Big\{ M_i = \begin{pmatrix}x_{i-1}&x_i\\y_{i-1}&y_i\end{pmatrix}: M_{i+1}&= M_i\begin{pmatrix}0&-1\\1&q_1\end{pmatrix}, M_1 = \begin{pmatrix} x_0& x_1\\ y_0& y_1\end{pmatrix},\notag\\ 
\text{and } x_{i+1}&= x_iq_i - x_{i-1}\Big\}\label{HB}
\end{align}
and  define $\X$ to be the union over all nonzero $\eta \in \O$ of $\X_\eta.$ Any element $(\begin{smallmatrix} a& b \\ c & d\end{smallmatrix}) \in \X_\eta$ satisfies $N(a)> N(b) \ge 0$ and $N(d) > N(c) \ge 0$, analogous to the Heilbronn families used by Merel \cite{merel}. By construction, the element
\[
\sum_{\theta \in \X_\eta} \theta\in R[\text{Mat}_2(\O)]
\]
satisfies condition $(C_\Delta)$. We remark that an explicit algorithm producing Heilbronn-Merel families is given in Algorithm 3.17 of \cite{moh}. In the case that $K$ is non-Euclidean but has class number one, one defines a set of matrices defining $T_{\Delta_\eta}$ under the assumption that a word decomposition of an element $g\in G$ is available.

\subsection{Hecke action}

It remains to construct a pair $(\phi,\Delta)$. In the case of $\Gamma = \Gamma_1(\n)$, it is constructed as follows. Let $\eta\in\O$ be a nonzero element coprime to $\n$. Define
\[
\Delta_\eta = \left\{\begin{pmatrix}a& b\\ c & d\end{pmatrix}\in\mathrm{Mat}_2(\O): ad-bc=\eta, c\equiv a-1\equiv 0\  (\bmod \n)\right\}.
\]
If  $M_2(\O)_\eta$ is the subset of elements of $\mathrm{Mat}_2(\O)$ with determinant $\eta$, then $\Delta_\eta = M_2(\O)_\eta \cap \Gamma_1(\n)$.   
It follows that $\Gamma_1(\n)\Delta_\eta = \Delta_\eta\Gamma_1(\n)= \Delta_\eta$ and the set $\Gamma_1(\n) \backslash \Delta_\eta$ is finite. We denote then $T_{\Delta_\eta}$ the Hecke operator acting on $\M_k(\n)$. 

Define 
\[
E_\n=\{(u,v)\in (\O/ \n\O)^2:u\O+v\O = \O/\n\O\}.
\]
There is a surjective map $\pi:G(\O)\to E_\n$ sending $M$ to $(0,1)M$. Note that $M,M'\in G(\O)$ have the same image under this map if and only if $M\in \Gamma_1(\n) M'$, therefore we have a bijection
\[
\Gamma_1(\n)\backslash G(\O)\stackrel{\sim}{\to} E_\n.
\]
Let $\lambda$ be a section of $\pi$, and let $x\in E_\n$. Then the Manin symbol $[P,\lambda(x)]$ depends only on $\Gamma_1(\n)\lambda(x)$ and $P$. Define an action of Mat$_2(\O)$ on $x=(u,v)$ by the matrix mutiplication
\[
(u,v)\begin{pmatrix}a  & b\\c & d\end{pmatrix} = (au+cv, bu+dv).
\]
The space of Manin symbols of weight $k$ and level $\Gamma_1(\n)$ over a ring $R$ can then be identified with $R_{k-2}[X,Y][E_\n]$. That $(\phi_\eta,\Delta_\eta)$ is a $(\phi,\Delta)$ pair is proven in \cite[Lemma 3.3]{moh}, we nonetheless include a direct proof of this in the proof of the following proposition.

\begin{theorem}
\label{heckethm}
Let $P[u,v]\in R_{k-2}[X,Y][E_\n]$. Let $\sum_Mu_M\in R[M_2(\O)_\eta]$ be such that for all classes $C\in M_2(\O)_\eta/G(\O)$, we have
\be
\label{Cn}
\sum_{M\in C}u_M([M\infty]-[M0])= [\infty] - [0].
\ee
Then
\[
T_{\Delta_\eta}([P,(u,v)]) = \sum_{M\in C} u_M[P(aX+bY,cX + dY), (au+cv,bu+dv)]
\]
where the sum runs over matrices $M$ such that $(au+cv,bu+dv)\in E_\n$.
\end{theorem}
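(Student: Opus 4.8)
The plan is to deduce Theorem \ref{heckethm} as a direct specialization of Proposition \ref{merel4} to the pair $(\phi_\eta,\Delta_\eta)$ and the Manin symbol description $[P,(u,v)] = [P,\lambda(u,v)]$. First I would verify that $(\phi_\eta,\Delta_\eta)$ is indeed a $(\phi,\Delta)$ pair in the sense required, giving a direct proof as promised in the text: one checks that $\Gamma_1(\n)\Delta_\eta = \Delta_\eta\Gamma_1(\n) = \Delta_\eta$ and $\Gamma_1(\n)\backslash\Delta_\eta$ is finite (this follows since $\Delta_\eta = M_2(\O)_\eta\cap\Gamma_1(\n)$ and $\eta$ is coprime to $\n$, so every class has a representative which is upper-triangular modulo $\n$), and then one defines $\phi_\eta(\gamma)$ for $\gamma\in\tilde\Delta_\eta G(\O)$ by composing the reduction $G(\O)\to E_\n$, $M\mapsto (0,1)M$, with a fixed section $\lambda$, adjusted by the $\tilde\Delta_\eta$ part; properties (1)--(3) then reduce to the bijection $\Gamma_1(\n)\backslash G(\O)\xrightarrow{\sim} E_\n$ and the fact that $\tilde\Delta_\eta\subset\Gamma_1(\n)$ modulo $\n$ acts trivially on the $E_\n$-coordinate. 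The Shimura involution $g\mapsto\tilde g = g^{-1}\det(g)$ interacts well with the congruence conditions because $\det(g)=\eta$ is a unit mod $\n$.

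Next I would translate the hypothesis. Condition \eqref{Cn} is exactly condition $(C_\Delta)$ for $\Delta = \Delta_\eta$: since $\tilde\Delta_\eta G(\O) = M_2(\O)_\eta$ as a union of $G(\O)$-cosets (because $\widetilde{M_2(\O)_\eta} = M_2(\O)_\eta$ and $\tilde\Delta_\eta$ surjects onto the relevant quotient by property (3)), the classes $C\in M_2(\O)_\eta/G(\O)$ coincide with the classes $C\in\tilde\Delta_\eta G(\O)/G(\O)$ appearing in $(C_\Delta)$, and the displayed identity in $R[\P^1(K)]$ is the same one. So Proposition \ref{merel4} applies verbatim and yields
\[
T_{\Delta_\eta}([P,\lambda(u,v)]) = \sum_M u_M\,[P|_{\tilde M},\,\phi_\eta(\lambda(u,v)M)],
\]
the sum over $M$ with $\lambda(u,v)M\in\tilde\Delta_\eta G(\O) = M_2(\O)_\eta$, i.e. over all $M$ in the given family (since $\det M = \eta$ automatically).

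It then remains to unwind the two pieces of notation. For the slash action: by the definition $P|_g(X,Y) = P(dX-bY,-cX+aY)$ with $g = (\begin{smallmatrix}a&b\\c&d\end{smallmatrix})$, and since $\tilde M = (\begin{smallmatrix}d&-b\\-c&a\end{smallmatrix})$ for $M = (\begin{smallmatrix}a&b\\c&d\end{smallmatrix})$, a direct substitution gives $P|_{\tilde M}(X,Y) = P(aX+bY, cX+dY)$, matching the statement. For the coset index: by property (1) of the $(\phi,\Delta)$ pair, $\Gamma_1(\n)\phi_\eta(\lambda(u,v)M) = \Gamma_1(\n)\phi_\eta(\lambda(u,v))\cdot(\text{something})$; more directly, since the Manin symbol $[P',g]$ depends only on $\Gamma_1(\n)g$, and since $\phi_\eta$ was built from the section of $\pi:M\mapsto(0,1)M$, one has $\Gamma_1(\n)\phi_\eta(\lambda(u,v)M) = \Gamma_1(\n)\lambda\big((0,1)\lambda(u,v)M\big) = \Gamma_1(\n)\lambda\big((u,v)M\big)$, and by the definition of the Mat$_2(\O)$-action on $E_\n$, $(u,v)M = (au+cv, bu+dv)$. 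Hence $[P|_{\tilde M},\phi_\eta(\lambda(u,v)M)] = [P(aX+bY,cX+dY),(au+cv,bu+dv)]$, with the term understood to vanish unless $(au+cv,bu+dv)\in E_\n$ (consistent with the convention $\phi(P[u,v])=0$ off $E_\n$), which gives precisely the asserted formula.

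The main obstacle I anticipate is the bookkeeping around $\phi_\eta$ and the Shimura involution: one must be careful that the quantity $(0,1)\lambda(u,v)M$ lies in $E_\n$ exactly when $(u,v)M$ does — i.e. that the reduction of $\lambda(u,v)M$ modulo $\n$ has coprime bottom row — and that the definition of $\phi_\eta$ on all of $\tilde\Delta_\eta G(\O)$ (not just $G(\O)$) is compatible with property (1), so that the cancellation $s^{-1}M\tilde M = \tilde s$ in the proof of Proposition \ref{merel4} genuinely produces the claimed representative. Checking that $\tilde\Delta_\eta$ acts trivially on the $E_\n$-coordinate, which is what makes $\phi_\eta$ well-defined, uses $\eta\equiv 1$ being a unit mod $\n$ together with the explicit congruence conditions defining $\Delta_\eta$; this is the one place where the coprimality of $\eta$ and $\n$ is essential, and it should be stated carefully rather than left implicit.
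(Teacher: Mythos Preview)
Your proposal is correct and follows essentially the same route as the paper: verify that $(\phi_\eta,\Delta_\eta)$ is a $(\phi,\Delta)$ pair, observe that condition \eqref{Cn} is condition $(C_{\Delta_\eta})$, apply Proposition \ref{merel4}, and unwind $P|_{\tilde M}$ and $\phi_\eta(\lambda(u,v)M)$ via the bijection $\Gamma_1(\n)\backslash G(\O)\simeq E_\n$. The one place to tighten is your identification $\tilde\Delta_\eta G(\O)=M_2(\O)_\eta$: rather than invoking property (3), argue directly that any $M\in M_2(\O)_\eta$ has bottom row coprime to $\n$ (since $\det M=\eta$ is), which is exactly the paper's characterization of $\tilde\Delta_\eta G(\O)$.
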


\begin{proof}
We first note that the requirement \eqref{Cn} implies that the sum $\sum_Mu_M$ satisfies condition $C_{\Delta_\eta}$. On the other hand, observe that the set $\tilde{\Delta}_\eta G(\O)$ is the set of matrices in $M_2(\O)_\eta$ such that 
\[
c\O/\n\O + d\O/\n\O = \O/\n\O,
\]
in other words, $c,d,\n$ are coprime ideals. Define $\phi_\eta:\tilde{\Delta}_\eta G(\O) \to G(\O)$ to be the map such that
\[
\pi(\phi_\eta(\begin{pmatrix}a  & b\\c & d\end{pmatrix})) = (c,d) \in E_\n.
\]
In particular, we have that $\pi(\phi_\eta(M))= (0,1)M$ for any $M$. Also, note that $gM\in \tilde{\Delta}_\eta G(\O)$ if and only if $\pi(g)M\in E_\n.$

If we can show that $(\phi_\eta,\Delta_\eta)$ is a $(\phi,\Delta)$ pair, then Proposition \ref{merel4} will apply and the result will follow. 

Property (1) follows since
\[
\pi(\phi_\eta(\begin{pmatrix}a  & b\\c & d\end{pmatrix}g)) = (c,d)g  = \pi(\phi_\eta(\begin{pmatrix}a  & b\\c & d\end{pmatrix})g).
\]

Property (2) follows since $g\in \tilde{\Delta}_\eta G(\O)$ if and only if $(0,1)g = (0,1)$ in $E_\n$, so
$
(0,1)g = (0,1)\phi_\eta(g) 
$
and thus $g\phi_\eta(g)^{-1}\in\tilde{\Delta}_\eta$. 

For Property (3), we consider $\delta,\delta'\in \Delta_\eta$ such that
\[
\delta' \delta^{-1} = \begin{pmatrix}a'  & b'\\c' & d'\end{pmatrix}\begin{pmatrix}a  & b\\c & d\end{pmatrix}^{-1} 
=\eta^{-1} \begin{pmatrix} da'-b'c & -ba'+ab'\\dc'-d'c & -bc'+ad'\end{pmatrix}
\]
belongs to $G(\O).$ By definition of $\Delta_\eta$ it follows that $\eta \equiv d \equiv d' \pmod \n$, so $\delta,\delta',$ and $\delta'\delta^{-1}$ are all upper triangular matrices modulo $\n$. Moreover, since $a\equiv a'\equiv 1\pmod \n,$ it follows that $da'-b'c\equiv 1\pmod \n$ also, thus $\delta'\delta^{-1}$ belongs to $\Gamma_1(\n)$.
\end{proof}

\section{Universal expansions}
\label{U}

\subsection{A Fourier expansion}

Define the Hecke algebra $\T$ to be the commutative subalgebra of the endomorphisms of $H^1(\Gamma,V)$, generated by elements 
\[
T_\pi = \begin{pmatrix}\pi&0\\ 0&1\end{pmatrix},\quad S_\pi= \begin{pmatrix}\pi&0\\ 0& \pi\end{pmatrix}
\]
in $GL_2(K)$, where $\pi$ is a prime in $\O$ coprime to the level of $\Gamma$. It is well-known that $\T$ stabilizes the the cuspidal part $H^i_\text{cusp}(\Gamma,V)$.

For ease of notation, we shall set $S_k(\n) = S_k(\Gamma_1(\n))$ and similarly $\SS_k(\n)$ and $\M_k(\n)$.

\begin{proposition}
\label{linearheck}
Let $\Lambda$ be a linear map from $\T$ to $\C$. Then for $(z,t)\in \H_3$, 
\be
\label{linear}
 \sum_{\alpha\in\O,\alpha\neq 0}\Lambda(T_\alpha) t^2{\bf K}\left(\dfrac{4\pi|\alpha|t }{\sqrt{d_K}}\right)\psi\left(\dfrac{\alpha z}{\sqrt{d_K}}\right)
\ee
is, except for the constant coefficient, the Fourier expansion of an element of $S_2(\n)$.
\end{proposition}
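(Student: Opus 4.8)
The plan is to reduce the statement to the duality between the Hecke algebra and the space of cusp forms, exactly as in Merel's classical argument. By Harder's isomorphism \eqref{harder} the algebra $\T$ acts on $S_2(\n)$, and I shall take this action to be faithful, replacing $\T$ by its image in $\mathrm{End}(S_2(\n))$ if necessary. Let $c_1\colon S_2(\n)\to\C$ be the functional sending a cusp form to the coefficient $c(1)$ of its Fourier--Bessel expansion. The first ingredient I would record is the compatibility
\[
c_1(T_\alpha f)=c_\alpha(f),\qquad \alpha\in\O\setminus\{0\},\ f\in S_2(\n),
\]
the Bianchi analogue of the classical identity $a_1(T_nf)=a_n(f)$; it follows from the explicit effect of $T_\pi$ and $S_\pi$ on Fourier--Bessel expansions together with the recursion defining $T_{\pi^e}$, using that the weight factors $N(\pi)^{k-2}$ are trivial for $k=2$ (cf. \cite{C,sengun}). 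I would also use that a weight-$2$ Bianchi cusp form is determined by the family $(c(\alpha))_{\alpha\neq0}$ of its Fourier--Bessel coefficients.

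The core of the argument is to show that the pairing $\T\times S_2(\n)\to\C$, $(T,f)\mapsto c_1(Tf)$, is nondegenerate in each variable. Nondegeneracy in $f$ is immediate: if $c_1(Tf)=0$ for all $T$, then $c_\alpha(f)=c_1(T_\alpha f)=0$ for all $\alpha$, so $f=0$. For nondegeneracy in $T$, suppose $c_1(Tf)=0$ for all $f$; since $\T$ is commutative and preserves $S_2(\n)$, for every $\alpha$ and $f$ one gets $c_\alpha(Tf)=c_1(T_\alpha Tf)=c_1(T(T_\alpha f))=0$, hence $Tf=0$ for all $f$ and $T=0$ by faithfulness. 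A pairing of finite-dimensional vector spaces that is nondegenerate on both sides is perfect, so I obtain an isomorphism $S_2(\n)\stackrel{\sim}{\to}\mathrm{Hom}(\T,\C)$ sending $f$ to $(T\mapsto c_1(Tf))$.

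To finish, given $\Lambda$ I let $f_\Lambda\in S_2(\n)$ be its preimage under this isomorphism; then $\Lambda(T_\alpha)=c_1(T_\alpha f_\Lambda)=c_\alpha(f_\Lambda)$ for every nonzero $\alpha$, so \eqref{linear} is precisely the Fourier--Bessel expansion of $f_\Lambda$, which has no constant term, and the claim follows. The hard part is not the formal duality but the two inputs behind the perfectness of the pairing: first, the faithfulness of the cuspidal Hecke action --- this is where one genuinely needs the identifications \eqref{harder} and \eqref{moh}, and, via the pairing of $S_k(\Gamma)$ with $\SS_k(\Gamma)$, the nondegeneracy assumption \eqref{ass}; and second, the Fourier-coefficient compatibility $c_1(T_\alpha f)=c_\alpha(f)$, which although routine must still be verified for Bianchi forms, where $T_\pi$ and $S_\pi$ act on vector-valued Fourier--Bessel expansions rather than on ordinary $q$-expansions.
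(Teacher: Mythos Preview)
Your proposal is correct and follows essentially the same route as the paper: both establish the nondegeneracy of the pairing $(T,f)\mapsto c_1(Tf)$ via the identity $c_1(T_\alpha f)=c_\alpha(f)$ and commutativity of $\T$, then invoke the resulting duality to produce $f_\Lambda$ with $\Lambda(T_\alpha)=c_\alpha(f_\Lambda)$. You are in fact slightly more explicit than the paper about the two inputs it uses tacitly---faithfulness of the $\T$-action on $S_2(\n)$ and the Fourier-coefficient compatibility---so no correction is needed.
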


\begin{proof}
Let $a_\alpha$ be the linear form on $M$ which associates to a Bianchi modular form its $\alpha$-th Fourier coefficient . We claim that the bilinear pairing on $S_2(\n)\times \T$ given by
\[
(f,T) \mapsto a_1(Tf)
\]
is nondegenerate.  First, fix $T\in\T$ and suppose that for all $f\in S_2(\n)$ we have $a_1(Tf) = 0$. Then for all nonzero $\alpha\in \O$, we have 
\[
0 = a_1(TT_\alpha f) = a_1(T_\alpha Tf) = a_\alpha(Tf),
\]
and it follows that $Tf=0$ for all $f\in S_2(\n)$, and therefore $T=0$.  Conversely, fix $f\in S_2(\n)$ and suppose that for all $T\in \T$ we have $a_1(Tf) = 0$. Then for all $\alpha\in\O$ we have
\[
a_1(T_\alpha f ) = a_\alpha(f) = 0
\]
and thus $f=0$.

Now then there exists $f\in S_2(\n)$ such that $\Lambda(T) = a_1(Tf)$ for any $T\in\T$. It follows then that \eqref{linear} is equal to 
\begin{align*}
& \sum_{\alpha\in\O,\alpha\neq 0} a_1(T_\alpha f) t^2{\bf K}\left(\dfrac{4\pi|\alpha|t }{\sqrt{d_K}}\right(\psi\left(\dfrac{\alpha z}{\sqrt{d_K}}\right)\\
&=  \sum_{\alpha\in\O,\alpha\neq 0} a_\alpha (f) t^2{\bf K}\left(\dfrac{4\pi|\alpha|t }{\sqrt{d_K}}\right(\psi\left(\dfrac{\alpha z}{\sqrt{d_K}}\right)
\end{align*}
which is the the Fourier expansion of $f$, up to the constant Fourier coefficient.
\end{proof}

\subsection{Proof of main theorem}
We can now complete the proof of Theorem \ref{main}.  Let
\[
x = \sum_{\lambda\in E_\n} P_\lambda[\lambda],\quad m(x)= \sum_{\lambda\in E_\n} [P_\lambda,\lambda]\in \M_k(\n).
\]
Using Lemma \ref{boundary} and the property that $\SS_k(\n)$ lies in the kernel of $\partial$ from Proposition \ref{exact}, it follows that the requirement $b(x) =0$ is equivalent to $m(x) \in\SS_k(\n)$. On the other hand, the relations \eqref{relation} imply that that the linear map $\phi$ factorizes through a linear map on $\M_k(\n)$ by means of the map
\[
R_{k-2}[X,Y][\Gamma\backslash G(\O)] \to \M_k(\n)
\]
sending $P[\Gamma g]$ to the Manin symbol $[P,g]$. Denote by $\phi_m$ the linear map on $\SS_k(\n)$ induced by $\phi$. 

By our assumption that \eqref{ass} is nondegenerate, the Hecke algebra on $S_k(\Gamma)$ is isomorphic to the algebra generated by operators $T_{\Delta_\eta}$ acting on $\M_k(\Gamma)$. The map
$
T \mapsto \phi_m(T(m(x))
$
is a linear map on the Hecke algebra. By Theorem \ref{heckethm}, we have for the $(C_{\Delta_\eta})$ family $\X_\eta$,
\begin{align*}
\phi_m(T_\alpha m(x)) &= \sum_{\lambda\in E_\n} \phi_m\left(\sum_{M\in \X_\eta}[P_\lambda|_{\bar M},\lambda M]\right)\\
&= \sum_{\lambda\in E_\n} \phi \left(\sum_{M\in \X_\eta}P_\lambda|_{\bar M}[\lambda M]\right)\\
&=\sum_{M\in \X_\eta} \phi|_M(x).
\end{align*}
Here we have used the fact that $\phi(P[x])=0$ if $x\not\in E_\n$.  Then taking $k=2$ and applying Proposition \ref{linearheck}, we have that 
\begin{align*}
& \sum_{\alpha\in\O,\alpha\neq 0}\phi_m(T_\alpha m(x)) t^2{\bf K}\left(\dfrac{4\pi|\alpha|t }{\sqrt{d_K}}\right)\psi\left(\dfrac{\alpha z}{\sqrt{d_K}}\right)
\\ 
&=  \sum_{\alpha\in\O,\alpha\neq 0} \sum_{M\in\X_\alpha} \phi|_M(x) t^2{\bf K}\left(\dfrac{4\pi|\alpha|t }{\sqrt{d_K}}\right)\psi\left(\dfrac{\alpha z}{\sqrt{d_K}}\right)\\ 
& = \sum_{M\in \X} \phi|_M(x) t^2{\bf K}\left(\dfrac{4\pi|\det(M)|t }{\sqrt{d_K}}\right)\psi\left(\dfrac{\det(M) z}{\sqrt{d_K}}\right) 
\end{align*}
is the Fourier expansion of a Bianchi cusp form in $S_k(\n)$.

Conversely, given any $f\in S_k(\n)$, let $\phi_f$ be the unique linear form on $\SS_k(\n)$ such that $\phi_f(y) = \langle f,y\rangle$ for all $y\in\SS_k(\n)$. Let now $\phi$ be the composition of $\phi_f$ with the canonical surjection from ker$(b)$ to $\SS_k(\n)$, and choose an element $x\in R_{k-2}[X,Y][\O/\n\O)^2]$ such that $b(x)=0$ and whose image $A$ in $\SS_k(\n)$ is the unique modular symbol such that $\langle f,A\rangle = a_1(f)$. Then since
\[
\phi(T_\alpha(x)) = \phi_f(T_\alpha A) = a_1(T_\alpha f) = a_\alpha(f)  
\]
we then have the Fourier expansion of $f$
\begin{align*}
& \sum_{\alpha\in\O,\alpha\neq 0}\phi(T_\alpha(x)) t^2{\bf K}\left(\dfrac{4\pi|\alpha|t }{\sqrt{d_K}}\right)\psi\left(\dfrac{\alpha z}{\sqrt{d_K}}\right)\\
& =  \sum_{\alpha\in\O,\alpha\neq 0}a_\alpha(f) t^2{\bf K}\left(\dfrac{4\pi|\alpha|t }{\sqrt{d_K}}\right)\psi\left(\dfrac{\alpha z}{\sqrt{d_K}}\right), 
\end{align*}
hence all Bianchi modular forms can be obtained by this method. 

\subsubsection*{Acknowledgments}The author thanks Debargha Banerjee and John Voight  for helpful discussions related to this work.  

\bibliography{HeckeAction}

\begin{thebibliography}{10}

\bibitem{berger}
T.~Berger, L.~Demb\'{e}l\'{e}, A.~Pacetti, and M.~H. {\c{S}}eng\"{u}n.
\newblock Theta lifts of {B}ianchi modular forms and applications to
  paramodularity.
\newblock {\em J. Lond. Math. Soc. (2)}, 92(2):353--370, 2015.

\bibitem{C}
J.~E. Cremona.
\newblock Hyperbolic tessellations, modular symbols, and elliptic curves over
  complex quadratic fields.
\newblock {\em Compositio Math.}, 51(3):275--324, 1984.

\bibitem{CA}
J.~E. Cremona and M.~T. Aran\'{e}s.
\newblock Congruence subgroups, cusps and {M}anin symbols over number fields.
\newblock In {\em Computations with modular forms}, volume~6 of {\em Contrib.
  Math. Comput. Sci.}, pages 109--127. Springer, Cham, 2014.

\bibitem{sengun}
M.~H. \c{S}eng\"{u}n.
\newblock Arithmetic aspects of {B}ianchi groups.
\newblock In {\em Computations with modular forms}, volume~6 of {\em Contrib.
  Math. Comput. Sci.}, pages 279--315. Springer, Cham, 2014.

\bibitem{harder}
G.~Harder.
\newblock Eisenstein cohomology of arithmetic groups. {T}he case {${\rm
  GL}_2$}.
\newblock {\em Invent. Math.}, 89(1):37--118, 1987.

\bibitem{merel}
L.~Merel.
\newblock Universal {F}ourier expansions of modular forms.
\newblock In {\em On {A}rtin's conjecture for odd {$2$}-dimensional
  representations}, volume 1585 of {\em Lecture Notes in Math.}, pages 59--94.
  Springer, Berlin, 1994.

\bibitem{moh}
A.~Mohamed.
\newblock Universal {H}ecke {$L$}-series associated with cuspidal eigenforms
  over imaginary quadratic fields.
\newblock In {\em Computations with modular forms}, volume~6 of {\em Contrib.
  Math. Comput. Sci.}, pages 225--256. Springer, Cham, 2014.

\bibitem{powell}
K.~J. Powell.
\newblock {\em Modular {S}ymbols {M}odulo {E}isenstein {I}deals for {B}ianchi
  {S}paces}.
\newblock ProQuest LLC, Ann Arbor, MI, 2015.
\newblock Thesis (Ph.D.)--The University of Arizona.

\bibitem{rahm2}
A.~Rahm and P.~Tsaknias.
\newblock Genuine bianchi modular forms of higher level, at varying weight and
  discriminant.
\newblock {\em Journal de Th{\'e}orie des Nombres de Bordeaux}.

\bibitem{rahm}
A.~D. Rahm and M.~H. {\c{S}}eng\"{u}n.
\newblock On level one cuspidal {B}ianchi modular forms.
\newblock {\em LMS J. Comput. Math.}, 16:187--199, 2013.

\bibitem{stein}
W.~Stein.
\newblock {\em Modular forms, a computational approach}, volume~79 of {\em
  Graduate Studies in Mathematics}.
\newblock American Mathematical Society, Providence, RI, 2007.
\newblock With an appendix by Paul E. Gunnells.

\bibitem{shok}
V.~V. \v{S}okurov.
\newblock A study of the homology of {K}uga varieties.
\newblock {\em Izv. Akad. Nauk SSSR Ser. Mat.}, 44(2):443--464, 480, 1980.

\bibitem{whit}
E.~Whitley.
\newblock {\em Modular symbols and elliptic curves over imaginary quadratic
  number fields}.
\newblock PhD thesis, Ph. D. thesis, Exeter University, 1990.

\end{thebibliography}
\bibliographystyle{abbrv}

\end{document}